\newtheorem{theorem}{Theorem}
\newtheorem{lemma}{Lemma}
\newcommand{\del}{\partial}
\DeclareMathOperator{\Vol}{Vol}
\newcommand{\geqnbd}{\geq_{\mathrm{nbd}}}
\begin{document}

\title[Maps with small fibers]{A family of maps with many small fibers}
\author{Hannah Alpert}
\author{Larry Guth}
\address{MIT\\ Cambridge, MA 02139 USA}
\email{hcalpert@math.mit.edu}
\email{lguth@math.mit.edu}
\subjclass[2010]{53C23}
\begin{abstract}
The waist inequality states that for a continuous map from $S^n$ to $\mathbb{R}^q$, not all fibers can have small $(n-q)$-dimensional volume.  We construct maps for which most fibers have small $(n-q)$-dimensional volume and all fibers have bounded $(n-q)$-dimensional volume.
\end{abstract}
\maketitle

Let $n, q \in \mathbb{N}$ with $n > q \geq 1$, and let $f : S^n \rightarrow \mathbb{R}^q$ be a continuous map.  Let $\widehat{p} : \mathbb{R}^{n+1} \rightarrow \mathbb{R}^q$ be a surjective linear map, and let $p = \widehat{p}\vert_{S^n}$.  The waist inequality states that the largest fiber of $f$ is at least as large as the largest fiber of $p$:
\[\sup_{y \in \mathbb{R}^q} \Vol_{n-q} f^{-1}(y) \geq \sup_{y \in \mathbb{R}^q} \Vol_{n-q} p^{-1}(y).\]
See \cite{almgren65}, \cite{gromov83}, \cite{gromov03}, and \cite{memarian11} for proofs of the waist inequality, or \cite{guth14} for a survey.  In the case $q = 1$, the waist inequality is a consequence of the isoperimetric inequality on $S^n$.  The isoperimetric inequality can also be used to prove that the portion of $S^n$ covered by small fibers of $f$ is not very big; that is, for all $\varepsilon$, we have
\[\Vol_n\ f^{-1}\{y : \Vol_{n-q} f^{-1}(y) < \varepsilon\} \leq \Vol_n\ p^{-1}\{y : \Vol_{n-q} p^{-1}(y) < \varepsilon\}.\]
The theorem presented in this paper describes how the same statement does not hold in the case $q > 1$.  We have also included an appendix with a more precise statement of the waist inequality and the isoperimetric inequality.

\begin{theorem}\label{main-thm}
For every $n, q \in \mathbb{N}$ with $n > q > 1$, and for every $\varepsilon > 0$, there is a continuous map $f: S^n \rightarrow \mathbb{R}^q$ such that all but $\varepsilon$ of the $n$-dimensional volume of $S^n$ is covered by fibers that have $(n-q)$-dimensional volume at most $\varepsilon$.  Moreover, we may require that every fiber of $f$ has $(n-q)$-dimensional volume bounded by $C_{n, q}$, a constant not depending on $\varepsilon$.
\end{theorem}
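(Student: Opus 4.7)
Build $f$ by imposing matching ``fat-graph'' structures on both source and target. Set $r = \varepsilon^{1/(n-q)}$ and $\delta = r^{(n-1)/(q-1)}$ (so $\delta \leq r$ since $n > q > 1$), and put $M = r^{-(n-1)}$. On the target side, choose an embedded connected graph $G \subset [0,1]^q$ with $M$ edges, each parameterized by $[0,1]$, such that the $\delta$-tubular neighborhood $G_\delta$ covers $[0,1]^q$ up to a set of $q$-volume $\varepsilon$: the matching count $M\delta^{q-1} \sim 1$ makes this achievable via a Hilbert-curve-style construction. On the source, build an embedded graph $G' \subset S^n$ combinatorially identical to $G$ (same vertices, edges, and incidence pattern) whose $r$-tubular neighborhood $G'_r$ covers $S^n$ up to a set of $n$-volume $\varepsilon$; the analogous count $Mr^{n-1} \sim 1$ makes this feasible as well.

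\textbf{Definition of $f$.} On each edge tube of $G'_r$, identified with $[0,1] \times D^{n-1}_r$, define
\[
f(s, y_1, \ldots, y_{n-1}) = \bigl(s,\, \tfrac{\delta}{r}y_1,\, \ldots,\, \tfrac{\delta}{r}y_{q-1}\bigr),
\]
landing in the corresponding edge tube $[0,1] \times D^{q-1}_\delta$ of $G_\delta$; the fibers are $(n-q)$-disks of radius $\leq r$, hence of $(n-q)$-volume $\lesssim r^{n-q} = \varepsilon$. On each vertex ball $B^n_r$ of $G'_r$, define $f$ radially by $f(x) = (\delta|x|/r)\, g(x/|x|)$, where $g : S^{n-1} \to S^{q-1}$ is a fixed continuous map carrying each attaching cap on $S^{n-1}$ (for an incident edge in $S^n$) onto the corresponding cap on $S^{q-1}$ (for the outgoing edge in $\mathbb{R}^q$); fibers within the vertex ball are scaled copies of the fibers of $g$, hence of $(n-q)$-volume $\leq C_{n,q}\, r^{n-q}$. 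Extend $f$ continuously to the complement $S^n \setminus G'_r$, which consists of components of diameter $O(r)$; a Lipschitz extension yields fibers of $(n-q)$-volume $O(r^{n-q}) = O(\varepsilon)$.

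\textbf{Conclusion and main obstacle.} Every fiber then has $(n-q)$-volume at most $C_{n,q}\,\varepsilon$, which (after rescaling $\varepsilon$ by an absolute constant) gives both conclusions of the theorem at once: all fibers are small---so in particular most of $S^n$ lies in small fibers---and every fiber is bounded by a constant $C_{n,q}$ independent of $\varepsilon$. The main technical obstacle is the joint construction of the matched fat graphs $G$ and $G'$ with matching combinatorics and near-optimal packing; this requires iterated-subdivision (Hilbert-curve-like) embeddings into $[0,1]^q$ and $S^n$ whose thickness ratio $\delta/r = r^{(n-q)/(q-1)}$ is correctly calibrated, together with a consistent cell structure on the attaching spheres so that the fixed vertex map $g : S^{n-1} \to S^{q-1}$ can be defined compatibly at every vertex.
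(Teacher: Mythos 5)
Your conclusion that every fiber has $(n-q)$-volume at most $C_{n,q}\,\varepsilon$ is impossible, and this already shows there must be a gap. The waist inequality (quoted in the introduction and appendix of this paper) asserts that for \emph{any} continuous $f: S^n \to \mathbb{R}^q$ one has $\sup_y \Vol_{n-q}(f^{-1}(y)) \geq \Vol_{n-q}(S^{n-q}) > 0$. So at least one fiber must have $(n-q)$-volume bounded below by a dimensional constant; the supremum cannot tend to $0$ with $\varepsilon$. The two clauses of the theorem are genuinely different claims: the big fibers are unavoidable, and the content is that they can be confined to a set of total $n$-volume $\varepsilon$ while remaining bounded by a constant $C_{n,q}$ that does not shrink.

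The gap sits in the extension step. On the edge tubes and vertex balls of $G'_r$ your fiber estimates are plausible, but on the complement $U = S^n \setminus G'_r$ the assertion that ``a Lipschitz extension yields fibers of $(n-q)$-volume $O(r^{n-q})$'' is unjustified: Lipschitz control bounds the \emph{integral} of fiber volumes via the coarea formula, not the supremum. In fact the tube boundary $[0,1]\times \del D^{n-1}_r$ is mapped \emph{onto} the full $q$-dimensional tube $[0,1]\times D^{q-1}_\delta$ (the cross-section of $\del D^{n-1}_r$ projects onto the whole $(q-1)$-disk), so $f|_{\del U}$ sweeps out a $q$-dimensional image, and the extension to $U$ must fill $(n-1-q)$-cycles $f^{-1}(y)\cap\del U$ with $(n-q)$-chains in $U$ whose volume you have not bounded. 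This is exactly where the mandatory large fiber hides. There are also unresolved combinatorial matters --- a single fixed $g: S^{n-1}\to S^{q-1}$ cannot serve at vertices of differing degree, and the arrangement of attaching caps on $S^{n-1}$ must be matched consistently with a cyclic order on $S^{q-1}$ when $q=2$ --- but the waist-inequality contradiction is already decisive.

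The paper's construction is different in a way that matters: it maps to a \emph{tree} $T$ rather than a general graph, built recursively so that every fiber is an explicit cube skeleton, and it sends $\del I^n$ (and hence each face of $\del I^{n+1}$) to the root of $T$. This collapsing is what makes the gluing on $\del I^{n+1}$ work, but it also forces the root fiber to be $\del I^n$, a set of $(n-1)$-volume of order $1$. The large fibers are then controlled in two separate ways: their total $n$-volume is at most $2(n+1)\delta$, and their $(n-q)$-volume after taking a transverse linear cross-section and composing with the finite-to-one map $\phi$ is bounded by a genuine constant $C_{n,q}$. If you want to salvage your approach, you would have to abandon the claim that the extension has fibers of volume $O(\varepsilon)$ and instead prove that (i) the large fibers forced to appear in $U$ have $(n-q)$-volume bounded by a constant, and (ii) they cover total $n$-volume at most $\varepsilon$ --- and at that point the argument would need to look much more like the paper's tree-map bookkeeping.
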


In what follows, $I^n = [0, 1]^n$ denotes the $n$-dimensional unit cube, and $\del I^n$ denotes its boundary.  A \emph{tree} refers to the topological space corresponding to a graph-theoretic tree: topologically, a tree is a finite $1$-dimensional simplicial complex that is contractible.  

The bulk of the construction comes from the following lemma, in which we construct a preliminary ``tree map'' $t_{n, r, \delta}$ from $I^n$ to a tree.  Later, to construct $f$ we will change the domain from $I^n$ to $S^n$ by gluing several tree maps together, and we will change the range from the tree to $\mathbb{R}^q$ by composing with a map from a thickened tree to $\mathbb{R}^q$.  In the tree map $t_{n, r, \delta}$, the parameter $r$ corresponds to the depth of the tree.  As $r$ increases, the typical fiber of the map becomes smaller.  The parameter $\delta$ corresponds to the total volume of the larger fibers.

\begin{lemma}
For every $n, r \in \mathbb{N}$, there is a rooted tree $T_{n, r}$ such that for every $\delta > 0$ there is a continuous map $t_{n, r, \delta} : I^n \rightarrow T_{n, r}$ with the following properties:
\begin{enumerate}
\item Every fiber of $t_{n, r, \delta}$ is either a single point, the boundary of an $n$-dimensional cube of side length at most $1$, or the $(n-1)$-skeleton of a $2 \times 2 \times \cdots \times 2$ array of $n$-dimensional cubes each of side length at most $\frac{1}{2}$.
\item All but $\delta$ of the volume of $I^n$ is covered by fibers of $t_{n, r, \delta}$ that are boundaries of $n$-dimensional cubes of side length at most $2^{-r}$.
\item $t_{n, r, \delta}(\del I^n)$ is a single point, the root of $T_{n, r}$.
\item Each vertex has at most $2^n$ daughter vertices.
\end{enumerate}
\end{lemma}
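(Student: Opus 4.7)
The plan is to induct on $r$, constructing the tree $T_{n,r}$ and, for each $\delta>0$, the map $t_{n,r,\delta}$ simultaneously. The tree is defined independently of $\delta$: let $T_{n,0}$ be a single edge (topologically $[0,1]$) with root $v_0$, and recursively let $T_{n,r}$ consist of a root $v_0$ joined by an edge to a branching vertex $v_1$, with $2^n$ copies of $T_{n,r-1}$ attached at $v_1$ by identifying their roots with $v_1$. For the base case $r=0$, we foliate $I^n$ by the concentric scaled cube boundaries $\del(sI^n)$ centered in $I^n$, $s\in[0,1]$, and define $t_{n,0,\delta}$ as the quotient onto $[0,1]=T_{n,0}$. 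Every fiber except the center point is a cube boundary of side $\le 1=2^0$, so properties (1)--(4) hold, with (2) trivially.

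For the inductive step, given $\delta>0$ we set $\eta=\delta/(4n)$ and $\delta'=\delta/2$, and let $C_\eta\subset I^n$ be the concentric sub-cube of side $1-2\eta$. On the shell $I^n\setminus C_\eta$ we define $t_{n,r,\delta}$ using the concentric cube-boundary foliation, mapped onto the edge $[v_0,v_1]$ with $\del I^n\mapsto v_0$ and $\del C_\eta\mapsto v_1$. We subdivide $C_\eta$ into $2^n$ congruent sub-cubes of side $(1-2\eta)/2\le 1/2$, and on each sub-cube apply an affine rescaling of $t_{n,r-1,\delta'}$ mapping into the corresponding copy of $T_{n,r-1}$ attached at $v_1$. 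The two pieces agree on the interface $\del C_\eta$ since both send it to $v_1$, so the resulting map is continuous.

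The verification runs as follows. Properties (3) and (4) are immediate: $v_1$ receives exactly $2^n$ daughters (the roots of the attached subtrees), and inductively all other vertices have at most $2^n$ daughters. Property (1) holds because the shell fibers are cube boundaries of side at most $1$, the fiber over $v_1$ equals $\del C_\eta$ together with the interior walls of the $2^n$-subdivision (i.e.\ the $(n-1)$-skeleton of the array, with constituent cubes of side $(1-2\eta)/2\le 1/2$), and the remaining fibers are rescaled fibers of $t_{n,r-1,\delta'}$ with all cube side lengths multiplied by $(1-2\eta)/2\le 1/2$, preserving the size bounds. For property (2), the shell has volume $1-(1-2\eta)^n\le 2n\eta=\delta/2$, and by induction at most a $\delta'$-fraction of the volume inside each sub-cube escapes being covered by cube-boundary fibers of side $\le(1-2\eta)\cdot 2^{-r}\le 2^{-r}$; the total uncovered volume is thus at most $\delta/2+\delta'=\delta$.

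The main subtlety I anticipate lies at the interface between the shell foliation and the sub-cube pieces: one must verify that the fiber over $v_1$ is exactly the full $(n-1)$-skeleton of the sub-cube array, obtained by combining $\del C_\eta$ (the limit from the shell side) with the union of the sub-cube outer boundaries (the limit from the recursive side), so that no extra fibers or discontinuities appear at the branching. Everything else is essentially routine bookkeeping once the $\delta$-independent recursive structure of $T_{n,r}$ is in place.
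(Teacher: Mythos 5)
Your proof is correct and takes essentially the same approach as the paper: the same recursive construction of $T_{n,r}$ by attaching $2^n$ copies of $T_{n,r-1}$ at the top of a root edge, the same decomposition of $I^n$ into a thin shell near $\del I^n$ (mapped to the root edge via concentric cube-boundary level sets) plus $2^n$ rescaled sub-cubes (each running a rescaled copy of $t_{n,r-1,\delta'}$), and the same parameter choices $\eta = \delta/(4n)$ and $\delta' = \delta/2$ for the volume bookkeeping. The interface issue you flag — that the fiber over the branching vertex is exactly the $(n-1)$-skeleton of the sub-cube array, using that $t_{n,r-1,\delta'}$ sends precisely $\del I^n$ to its root — is also where the paper's argument quietly relies on the recursive structure, and your verification of it is sound.
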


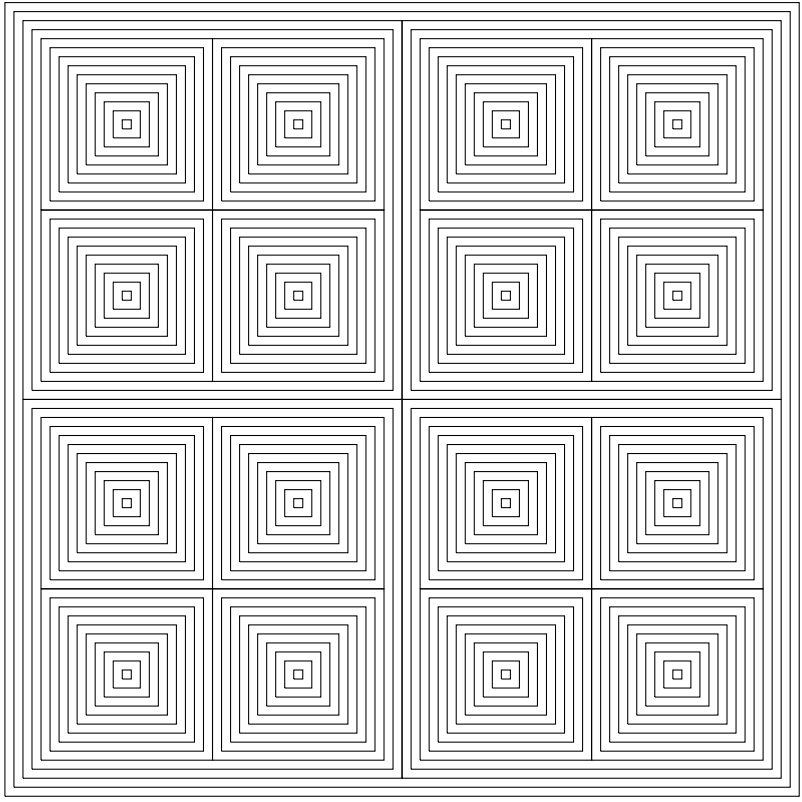
\begin{figure}
\begin{center}
\begin{tikzpicture}[scale=.12]
\veryfunnypicture
\end{tikzpicture}
\end{center}
\caption{Every fiber of $t_{2, 2, \delta}$ has length at most 6, and most fibers have length at most 1.}
\end{figure}

\begin{proof}
We construct the tree and tree map recursively in $r$.  For $r = 0$, the tree $T_{n, 0}$ is a single edge which we may identify with the interval $[0, \frac{1}{2}]$, with $0$ being the root.  For any $\delta$, we set $t_{n, 0, \delta}(x) = \mathrm{dist}(x, \del I^n)$ for all $x \in I^n$.

Now let $r > 0$.  To construct $T_{n, r}$, we take the disjoint union of one copy of $[0, 1]$ and $2^n$ copies of $T_{n, r-1}$, and identify the root of every copy of $T_{n, r-1}$ with $1 \in [0, 1]$.  The root of $T_{n, r}$ is $0 \in [0, 1]$.  We define $t_{n, r, \delta}$ piecewise as follows.  For some small choice of $\delta_1 > 0$, we define $t_{n, r, \delta}$ on the closed $\delta_1$-neighborhood of $\del I^n$ to $[0, 1] \subset T_{n, r}$ by 
\[t_{n, r, \delta}(x) = \frac{1}{\delta_1} \mathrm{dist}(x, \del I^n).\]
Then, translating the coordinate hyperplanes to pass through the center of $I^n$ we divide the remainder of the cube into a $2 \times 2 \times \cdots \times 2$ array of cubes $Q_1, \ldots, Q_{2^n}$ each of side length slightly less than $\frac{1}{2}$.  For each $j = 1, \ldots 2^n$, let $\lambda_j: Q_j \rightarrow I^n$ be the map that scales $Q_j$ up to unit size, and let $i_j: T_{n, r-1} \rightarrow T_{n, r}$ be the inclusion of the $j$th copy of $T_{n, r-1}$ into $T_{n, r}$.  Then for some small choice of $\delta_2 > 0$, we put
\[t_{n, r, \delta}\vert_{Q_j} = i_j \circ t_{n, r-1, \delta_2} \circ \lambda_j.\]

Properties 1, 3, and 4 are easily satisfied by the construction.  To ensure property 2, we need to choose $\delta_1$ and $\delta_2$.  The volume of $I^n$ that is covered by large fibers---fibers not equal to the boundary of a cube of side length at most $2^{-r}$---is at most $\delta_1 \cdot 2n + 2^n \cdot \delta_2 \cdot 2^{-n}$, because the area of $\del I^n$ is $2n$ and because the portion of each $Q_j$ that is covered by large fibers has volume at most $\delta_2 \cdot \Vol(Q_j) < \delta_2 \cdot 2^{-n}$.  Thus we may choose $\delta_1 = \frac{\delta}{4n}$ and $\delta_2 = \frac{\delta}{2}$.
\end{proof}

\begin{proof}[Proof of Theorem~\ref{main-thm}]
We may replace $S^n$ by $\del I^{n+1}$ by composing with the (bi-Lipschitz) homeomorphism $\psi: S^n \rightarrow \del I^{n+1}$ given by lining up the centers of $S^n$ and $\del I^{n+1}$ in $\mathbb{R}^{n+1}$ and projecting radially.  We start by constructing a tree $T$ and a tree map $t: \del I^{n+1} \rightarrow T$.  For some large choice of $r$, let $T$ be the tree obtained by identifying the roots of $2(n+1)$ copies of $T_{n, r}$, one for each $n$-dimensional face of $\del I^{n+1}$.  For some small choice of $\delta$, define $t$ on each $n$-dimensional face of $\del I^{n+1}$ to be the composition of $t_{n, r, \delta}$ with the inclusion of the corresponding $T_{n, r}$ into $T$.

The fibers of $t$ have dimension $n-1$.  In order to cut the fibers down to dimension $n-q$, we next construct a projection map $p: \del I^{n+1} \rightarrow \mathbb{R}^{q-1}$ such that the fibers of $p$ intersect the fibers of $t$ transversely.  The fibers of $t$ have codimension $2$ in $\mathbb{R}^{n+1}$ and are aligned with the standard coordinates, so we achieve transversality by using other linear coordinates to construct $p$.  We choose $q-1$ linearly independent vectors $v_1, \ldots, v_{q-1} \in \mathbb{R}^{n+1}$ such that for every two standard basis vectors $e_i, e_j \in \mathbb{R}^{n+1}$ the spaces $\mathrm{span}\{e_i, e_j\}^\perp$ and $\mathrm{span}\{v_1, \ldots, v_{q-1}\}^\perp$ intersect transversely; equivalently, the set $e_i, e_j, v_1, \ldots, v_{q-1}$ is linearly independent.  For $k = 1, \ldots, q-1$, define the $k$th component of $p$ to be the dot product of the input with $v_k$.  Then the fibers of $t\times p: \del I^{n+1} \rightarrow T \times \mathbb{R}^{q-1}$ are codimension $q-1$ transverse linear cross-sections of the $(n-1)$-dimensional fibers of $t$, and have $(n-q)$-dimensional volume bounded by some constant depending on $n$ and $q$.

There exists $M$ large enough that $p(\del I^{n+1})$ is contained in the $(q-1)$-dimensional ball $B(M)$ of radius $M$.  We define a map $\phi: T \times B(M) \rightarrow \mathbb{R}^q$ such that the number of points in each fiber of $\phi$ is at most the maximum degree of $T$, which is $2^n + 1$.  Then we define $f = \phi \circ (t\times p)$.  The fibers of $f$, like the fibers of $t \times p$, have $(n-q)$-dimensional volume bounded by a constant $C_{n, q}$.

The map $\phi$ is constructed as follows.  Let $\phi\vert_{T \times \{0\}}$ be an embedding of $T$ into $\mathbb{R}^q$ in which the edges map to straight line segments and each daughter vertex has $x_1$-coordinate greater than that of its parent.  Let $d$ be the minimum distance between disjoint edges of $\phi(T\times \{0\})$.  Then for every $p \in T$ and $x \in B(M)$, we set
\[\phi(p, x) = \phi(p, 0) + \frac{d}{4} \left(0, \frac{x}{M}\right),\]
where $\left(0, \frac{x}{M}\right)$ denotes the point in $\mathbb{R}^q$ constructed by adding onto $\frac{x}{M} \in \mathbb{R}^{q-1}$ a first coordinate of $0$.  If $\phi(p, x) = \phi(p', x')$, then  $\phi(p, 0)$ and $\phi(p', 0)$ are at most $\frac{d}{2}$ apart, so $p$ and $p'$ lie on two incident edges of $T$;  also, $\phi(p, 0)$ and $\phi(p', 0)$ have the same $x_1$-coordinate, so these two edges are between two daughters and a common parent, rather than a daughter, a parent, and a grandparent.

To finish the proof, we show that $\delta$ and $r$ may be chosen such that all  but $\varepsilon$ of the $n$-dimensional volume of $\del I^{n+1}$ is covered by fibers with $(n-q)$-dimensional volume at most $\varepsilon$.  The maximum number of daughter vertices of any vertex of $T$ is $2^n$, and most of $\del I^{n+1}$ is covered by fibers of $f$ that are unions of at most $2^n$ codimension $q-1$ transverse linear cross-sections of boundaries of $n$-dimensional cubes of side length at most $2^{-r}$.  We choose $r$ large enough that every codimension $q-1$ transverse linear cross-section of $2^{-r} \del I^n$ has $(n-q)$-dimensional volume at most $\frac{\varepsilon}{2^n}$.  The volume of the portion of $\del I^{n+1}$ covered by larger fibers is at most $2(n+1) \cdot \delta$, so we choose $\delta < \frac{\varepsilon}{2(n+1)}$.
\end{proof}

\section*{Appendix: The waist inequality and the isoperimetric inequality}

In order to be precise about the waist inequality, we need a notion of $(n-q)$-dimensional volume of arbitrary closed subsets in $S^n$.  Gromov's version of the waist inequality is stated in terms of the Lebesgue measures $\Vol_n$ of the $\varepsilon$-neighborhoods $f^{-1}(y)_\varepsilon$ of the fibers $f^{-1}(y)$ of a continuous map $f$.

\begin{theorem}[Waist inequality, \cite{gromov03}]
Let $f: S^n \rightarrow \mathbb{R}^q$ be a continuous map.  Then there exists a point $y \in \mathbb{R}^q$ such that for all $\varepsilon > 0$, we have
\[\Vol_n(f^{-1}(y)_\varepsilon) \geq \Vol_n(S^{n-q}_\varepsilon),\]
where $S^{n-q} \subset S^n$ denotes an equatorial $(n-q)$-sphere.
\end{theorem}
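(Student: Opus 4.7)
The plan is to follow Gromov's approach via iterated convex bisection of $S^n$ combined with an equivariant Borsuk--Ulam argument. The strategy reduces the global neighborhood estimate $\Vol_n(f^{-1}(y)_\varepsilon) \geq \Vol_n(S^{n-q}_\varepsilon)$ to many local isoperimetric inequalities on small, nearly Euclidean convex pieces, and then reassembles them.

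First I would construct, for each large integer $N$, a family of partitions of $S^n$ into $N$ convex regions $P_1, \ldots, P_N$ of equal $n$-volume, organized as the leaves of a binary bisection tree and depending continuously on an auxiliary parameter $z \in Z$ that carries a natural symmetric group action. At each node of the tree, a ham-sandwich type statement provides a hyperplane that simultaneously bisects the $n$-volume of the current region and controls an additional $q$-dimensional moment coming from $f$. Iterating down the tree produces a whole space of candidate partitions parametrized by $z$.

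Second, using an equivariant Borsuk--Ulam style argument on $Z$, I would extract a specific value $y \in \mathbb{R}^q$ and a particular partition in the family such that for every piece $P_i$ the fiber $f^{-1}(y)$ meets $P_i$ in a ``centered'' way---for instance, so that $f^{-1}(y) \cap P_i$ bisects $P_i$ in every linear coordinate. The obstruction comes from the non-existence of an equivariant section of a certain vector bundle over $Z$; this is where the cohomology of the symmetric group enters, and it is the step that makes the whole argument work.

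With $y$ and the partition in hand, on each piece $P_i$ of diameter $O(N^{-1/n})$ the local spherical isoperimetric inequality---which in this regime is essentially the Euclidean one on a convex body---bounds the volume of the $\varepsilon$-neighborhood of $f^{-1}(y) \cap P_i$ inside $P_i$ from below by that of an $(n-q)$-dimensional equatorial slice of $P_i$. Summing over $i$ and passing to a subsequential limit $y_\infty$ as $N \to \infty$, using compactness of $\mathbb{R}^q$ restricted to $f(S^n)$, yields $\Vol_n(f^{-1}(y_\infty)_\varepsilon) \geq \Vol_n(S^{n-q}_\varepsilon)$. The main obstacle, as usual for the waist inequality, is the equivariant topology step: establishing the non-existence of the equivariant section requires delicate configuration-space and cohomology computations, and essentially all the difficulty of the theorem is concentrated there; the local isoperimetric step and the limiting procedure, while requiring care, are comparatively standard.
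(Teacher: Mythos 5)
The paper does not prove this theorem for general $q$: it states it as a known result, citing \cite{gromov03} for the original argument and \cite{memarian11} for a detailed exposition that fills gaps. The only thing the paper itself proves in this direction is Theorem~\ref{codim1}, which yields the waist inequality in the special case $q = 1$ as a corollary of the spherical isoperimetric inequality, via the lemma comparing $X \cap Y$ with $B^X \cap B^Y$ for a closed cover $\{X, Y\}$ of $S^n$. That argument is entirely elementary and has nothing of the equivariant-topology machinery in your sketch. So for $q = 1$ you have taken a genuinely different (and far heavier) route than the paper, and for $q > 1$ there is simply no proof in the paper to compare against.

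As a high-level roadmap of Gromov's argument, your sketch is recognizable: a family of convex partitions of $S^n$ parametrized by a configuration-type space, an equivariant Borsuk--Ulam step to locate a value $y$ for which $f^{-1}(y)$ is ``centered'' in every piece, local isoperimetric estimates on each piece, and a limit as the mesh goes to zero. You are also right that the equivariant topology is where the real difficulty sits. But be honest that this is a roadmap, not a proof: you have not specified the partition scheme (Gromov uses a pancake-type decomposition, not an arbitrary ham-sandwich bisection tree), you have not stated the equivariant nonexistence result or even the group acting, you have not defined what ``centered'' means with enough precision to feed into the local estimate, and you have not addressed the genuinely delicate bookkeeping involved in summing neighborhood volumes across pieces and passing to the limit in $y$. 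Describing those last steps as ``comparatively standard'' understates them: the existence of \cite{memarian11} is itself evidence that several of them required real work to nail down. If you want a complete argument at the level of this paper, the honest options are to cite the result (as the paper does) or restrict to $q = 1$ and run the isoperimetric argument of Theorem~\ref{codim1}.
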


The paper \cite{memarian11} gives a detailed exposition of the proof of the waist inequality and fills in some small gaps in the original argument.  For convenience we introduce a notation for comparing the $\varepsilon$-neighborhoods of two sets: given $E, F \subseteq S^n$, we say that $E$ is \textbf{\textit{larger in neighborhood}} than $F$, denoted $E \geqnbd F$, if for all $\varepsilon > 0$ we have
\[\Vol_n(E_\varepsilon) \geq \Vol_n(F_\varepsilon).\]
Then the waist inequality states that for some $y \in \mathbb{R}^q$ we have $f^{-1}(y) \geqnbd S^{n-q}$.

In the case $q = 1$, we would like to say that the waist inequality is a consequence of the isoperimetric inequality.  The classical isoperimetric inequality applies only to regions with smooth boundary, so we need the following version, which is stated and proved in \cite{flm} and attributed to \cite{schmidt48}:

\begin{theorem}[Isoperimetric inequality]
Let $A \subseteq S^n$ be a closed set and $B \subseteq S^n$ be a closed ball with $\Vol_n(B) = \Vol_n(A)$.  Then we have
\[A \geqnbd B.\]
\end{theorem}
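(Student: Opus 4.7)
The plan is to prove the isoperimetric inequality by spherical symmetrization via two-point polarization. Fix a great hypersphere $H \subset S^n$ dividing $S^n$ into hemispheres $H^+, H^-$ with reflection $\sigma$ swapping them. Given a closed set $E$, define its polarization $E^*$ pair-by-pair: for $x \in H^+$, put $x \in E^*$ exactly when $E$ meets $\{x, \sigma(x)\}$; for $x \in H^-$, put $x \in E^*$ exactly when $E$ contains both $x$ and $\sigma(x)$. Intuitively, polarization rearranges mass toward the preferred hemisphere $H^+$ without changing the number of points $E$ contributes to any pair.

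The first step is to establish two properties of polarization: the volume identity $\Vol_n(E^*) = \Vol_n(E)$, which is immediate since $E$ and $E^*$ meet every pair $\{x, \sigma(x)\}$ in the same number of points; and the neighborhood inclusion $(E^*)_\varepsilon \subseteq (E_\varepsilon)^*$, which follows from a short case analysis using the fact that for $x \in H^+$ and $y \in H^-$ one has $d(x, \sigma(y)) \leq d(x, y)$. Combining these gives $\Vol_n((E^*)_\varepsilon) \leq \Vol_n((E_\varepsilon)^*) = \Vol_n(E_\varepsilon)$, so polarization preserves volume and weakly shrinks every neighborhood.

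Next I would iterate polarization through a sequence of great hyperspheres $H_1, H_2, \ldots$ all passing through a fixed north pole $N$ and chosen so that their reflections generate a dense subgroup of the rotations fixing $N$. This produces closed sets $A = A_0, A_1, \ldots$ of constant volume with $\Vol_n((A_k)_\varepsilon)$ non-increasing in $k$. By compactness of the space of closed subsets of $S^n$ under the Hausdorff metric, a subsequence converges to a closed set $A_\infty$ invariant under all rotations fixing $N$, hence a spherical cap centered at $N$---the desired ball $B$ of the same volume as $A$.

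The main obstacle is the limit step: Lebesgue measure is only upper semicontinuous under Hausdorff convergence of closed sets, so justifying both $\Vol_n(B) = \Vol_n(A)$ and $\Vol_n(B_\varepsilon) \leq \lim \Vol_n((A_k)_\varepsilon)$ takes care. A clean workaround is to approximate $A$ from the outside by a decreasing sequence of finite unions of closed geodesic balls, apply Schmidt's smooth isoperimetric inequality \cite{schmidt48} to each approximation, and pass to the limit using $A_\varepsilon = \bigcap_{\eta \downarrow 0}(A^{(\eta)})_\varepsilon$ together with downward continuity of Lebesgue measure on compact sets.
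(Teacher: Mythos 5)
The paper does not actually prove this theorem; it states it and cites \cite{flm} (attributing the result to \cite{schmidt48}), so there is no internal proof to compare against. For what it is worth, the argument in \cite{flm} is also built on two-point symmetrization, so your starting point is the standard one. Your opening steps are sound: polarization across a great hypersphere is measure-preserving and satisfies $(E^*)_\varepsilon \subseteq (E_\varepsilon)^*$, hence $\Vol_n((E^*)_\varepsilon) \leq \Vol_n(E_\varepsilon)$. I would also point out that the semicontinuity issue you flag actually breaks in your favor: a Hausdorff limit $A_\infty$ of the polarizations satisfies $\Vol_n(A_\infty) \geq \Vol_n(A)$ and $\Vol_n((A_\infty)_\varepsilon) \leq \Vol_n(A_\varepsilon)$, and if $A_\infty$ were known to be a cap centered at $N$ it would then contain the cap $B$ of the target volume, giving $\Vol_n(B_\varepsilon) \leq \Vol_n((A_\infty)_\varepsilon) \leq \Vol_n(A_\varepsilon)$ with no further work.

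The genuine gap is the step asserting that a subsequential Hausdorff limit $A_\infty$ is ``invariant under all rotations fixing $N$, hence a spherical cap.'' The iterates $A_k$ are not themselves symmetric under the reflections $\sigma_k$, and compactness of the Hausdorff metric gives you nothing beyond the existence of a closed limit; it confers no symmetry whatsoever. Forcing the limit to be a cap requires a real argument---either an adaptive choice of polarizing hyperspheres (for instance, greedily maximizing the overlap with the target cap at each step) together with a monotonicity quantity, or an appeal to a convergence theorem for iterated polarization in the style of Baernstein--Taylor or Van Schaftingen. This is the heart of the symmetrization proof and cannot be dispatched as a compactness remark. The ``workaround'' you offer at the end also has a gap: a finite union of closed geodesic balls does not have smooth boundary (the bounding spheres intersect), so Schmidt's smooth-boundary isoperimetric inequality does not apply to your approximants as written, and you are back to needing an isoperimetric inequality for non-smooth sets---exactly the statement to be proved.
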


In the introduction we claimed that in the case $q = 1$, the isoperimetric inequality could be used to prove, in addition to the waist inequality, another statement about the volume of $S^n$ covered by small fibers.  Here we formulate the statement more precisely and prove it.  The proof implies the waist inequality for $q = 1$.

\begin{theorem}\label{codim1}
Let $f: S^n \rightarrow \mathbb{R}$ be a continuous map, and $p: S^n \rightarrow \mathbb{R}$ be the restriction to $S^n$ of a surjective linear map $\widehat{p}: \mathbb{R}^{n+1} \rightarrow \mathbb{R}$.  Then for all $y \in p(S^n)$, we have
\[\Vol_n\{x \in S^n : f^{-1}(f(x)) \geqnbd p^{-1}(y)\} \geq \Vol_n\{x \in S^n : p^{-1}(p(x)) \geqnbd p^{-1}(y)\}.\]
\end{theorem}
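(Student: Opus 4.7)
The plan is, for each level $t$ of $f$, to compare the fiber $f^{-1}(t)$ to an $(n-1)$-sphere by applying the isoperimetric inequality separately to the closed sets $\{f \leq t\}$ and $\{f \geq t\}$.

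First I would normalize the right-hand side.  The $(n-1)$-spheres $p^{-1}(s)$ and $p^{-1}(-s)$ are congruent, and among such parallel $(n-1)$-spheres the ones closer to the equator have strictly larger $\varepsilon$-neighborhoods, so $p^{-1}(s) \geqnbd p^{-1}(y)$ if and only if $|s| \leq |y|$.  Taking $y \geq 0$ without loss of generality and writing $V(t) := \Vol_n\{p \leq t\}$, the right-hand side becomes $\Vol_n\{|p| \leq y\} = V(y) - V(-y)$.  Set $w(t) := \Vol_n\{f \leq t\}$, $w^-(t) := \Vol_n\{f < t\}$, $h_1(t) := V^{-1}(w(t))$, and $h_2(t) := V^{-1}(w^-(t))$, so $h_2(t) \leq h_1(t)$.

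The heart of the argument is the pointwise claim that $f^{-1}(t) \geqnbd p^{-1}(h)$ for every $h \in [h_2(t), h_1(t)]$.  The isoperimetric inequality applied to $\{f \leq t\}$ and to $\{f \geq t\}$ gives $\{f \leq t\} \geqnbd \{p \leq h_1(t)\}$ and $\{f \geq t\} \geqnbd \{p \geq h_2(t)\}$.  The key geometric ingredient is the inclusion $(\{f \leq t\})_\varepsilon \cap (\{f \geq t\})_\varepsilon \subseteq (f^{-1}(t))_\varepsilon$: any $x$ in the intersection lies in at least one of $\{f \leq t\}, \{f \geq t\}$, and the geodesic from $x$ to a witness in the other set has length less than $\varepsilon$ and must cross $f^{-1}(t)$ by the intermediate value theorem.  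Combined with $\{f \leq t\} \cup \{f \geq t\} = S^n$ and inclusion-exclusion, this yields
\[\Vol_n((f^{-1}(t))_\varepsilon) \geq \Vol_n(\{p \leq h_1(t)\}_\varepsilon) + \Vol_n(\{p \geq h_2(t)\}_\varepsilon) - \Vol_n(S^n) = \Vol_n(\{h_2(t) \leq p \leq h_1(t)\}_\varepsilon),\]
and the last expression dominates $\Vol_n((p^{-1}(h))_\varepsilon)$ for any $h \in [h_2(t), h_1(t)]$.

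To conclude, whenever $t$ satisfies $w(t) \geq V(-y)$ and $w^-(t) \leq V(y)$, the interval $[h_2(t), h_1(t)]$ meets $[-y, y]$; picking $h$ in the intersection gives $|h| \leq y$, hence $p^{-1}(h) \geqnbd p^{-1}(y)$ and consequently $f^{-1}(t) \geqnbd p^{-1}(y)$.  By monotonicity of $w$ and $w^-$, the set of such $t$ is an interval $[s_1, s_2]$ with $w(s_2) \geq V(y)$ and $w^-(s_1) \leq V(-y)$, so
\[\Vol_n\{x : f(x) \in [s_1, s_2]\} = w(s_2) - w^-(s_1) \geq V(y) - V(-y),\]
which is the desired bound.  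The step requiring most care is the parallel bookkeeping of $w$ versus $w^-$ (and of $\{f \geq t\}$ versus $\{f > t\}$): this is what accommodates plateaus of $f$, since otherwise a single fiber whose volume bracket $[w^-(t_0), w(t_0)]$ alone engulfs $[V(-y), V(y)]$ would escape a comparison made through just one of the two isoperimetric bounds.
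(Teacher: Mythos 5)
Your proof is correct and follows essentially the same route as the paper: apply the isoperimetric inequality to the sublevel set $\{f\leq t\}$ and the superlevel set $\{f\geq t\}$, then show that the $\varepsilon$-neighborhood of $f^{-1}(t)$ dominates that of the matching $p$-slab, tracking both $\Vol_n\{f\leq t\}$ and $\Vol_n\{f< t\}$ to handle plateaus (the paper's $\alpha,\beta$ play the role of your $s_1,s_2$).  The paper packages the neighborhood comparison as a separate lemma for arbitrary closed $X,Y$ with $X\cup Y=S^n$, proved via a disjoint-union decomposition of $(X\cap Y)_\varepsilon$, which is equivalent to your inclusion $\bigl(\{f\leq t\}\bigr)_\varepsilon\cap\bigl(\{f\geq t\}\bigr)_\varepsilon\subseteq\bigl(f^{-1}(t)\bigr)_\varepsilon$ combined with inclusion--exclusion.
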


The proof of this theorem is based on the following lemma:

\begin{lemma}
Let $X, Y \subset S^n$ be closed sets with $X \cup Y = S^n$.  Let $B^X, B^Y \subset S^n$ be closed balls such that their two centers are antipodal in $S^n$ and $\Vol_n(B^X) = \Vol_n(X)$ and $\Vol_n(B^Y) = \Vol_n(Y)$.  Then we have
\[X \cap Y \geqnbd B^X \cap B^Y.\]
\end{lemma}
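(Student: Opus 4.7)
The plan is to reduce the lemma to two applications of the isoperimetric inequality via a combinatorial identity that exploits the hypothesis $X \cup Y = S^n$. The core step is to prove that, under this hypothesis,
\[\Vol_n((X \cap Y)_\varepsilon) = \Vol_n(X_\varepsilon) + \Vol_n(Y_\varepsilon) - \Vol_n(S^n)\]
for every $\varepsilon > 0$, and then to observe that the analogous identity holds for $B^X, B^Y$ as well: the antipodal-center condition together with $\Vol_n(B^X) + \Vol_n(B^Y) = \Vol_n(X) + \Vol_n(Y) \geq \Vol_n(S^n)$ forces the sum of the two radii to be at least $\pi$, so $B^X \cup B^Y = S^n$. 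Once the identity is in hand for both pairs, the isoperimetric inequality applied separately to $X$ against $B^X$ and to $Y$ against $B^Y$ yields $\Vol_n(X_\varepsilon) \geq \Vol_n((B^X)_\varepsilon)$ and $\Vol_n(Y_\varepsilon) \geq \Vol_n((B^Y)_\varepsilon)$; adding these and subtracting $\Vol_n(S^n)$ gives $X \cap Y \geqnbd B^X \cap B^Y$.

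To prove the identity I would show that $S^n \setminus (X \cap Y)_\varepsilon$ decomposes as the disjoint union of $S^n \setminus X_\varepsilon$ and $S^n \setminus Y_\varepsilon$. Disjointness is immediate from the hypothesis: $S^n \setminus X_\varepsilon \subseteq S^n \setminus X \subseteq Y$, which is disjoint from $S^n \setminus Y_\varepsilon$. The containment $\supseteq$ is routine because $X \cap Y$ is a subset of each of $X$ and $Y$. The key step is the reverse inclusion: given $x$ with $d(x, X \cap Y) > \varepsilon$, the closed ball $\overline{B(x, \varepsilon)}$ avoids $X \cap Y$ and therefore lies in the open set $X^c \cup Y^c$; since closed balls on $S^n$ are connected and the sets $X^c$ and $Y^c$ are disjoint (by $X \cup Y = S^n$), this ball must be contained entirely in one of them, yielding $d(x, X) > \varepsilon$ or $d(x, Y) > \varepsilon$.

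The main obstacle is exactly this connectedness step, since it is what forces the hypothesis $X \cup Y = S^n$ to enter the argument; without that hypothesis, the ball could straddle both complements and the identity would fail. Degenerate cases (one of $X$, $Y$ having full or zero volume, or $\varepsilon$ so large that the $\varepsilon$-neighborhood is all of $S^n$) should be checked separately, but each collapses to something trivial and does not affect the structure of the argument.
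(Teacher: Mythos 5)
Your proposal is correct and follows the same overall strategy as the paper: establish the identity $\Vol_n((X \cap Y)_\varepsilon) = \Vol_n(X_\varepsilon) + \Vol_n(Y_\varepsilon) - \Vol_n(S^n)$ (equivalently, that $(X \cap Y)_\varepsilon$ decomposes nicely in terms of $X_\varepsilon$ and $Y_\varepsilon$), do the same for $B^X, B^Y$, and then apply the isoperimetric inequality to $X$ and $Y$ separately and add. The one genuine point of divergence is the topological argument for the key inclusion: the paper picks a short curve $\gamma$ from a point $y \in X_\varepsilon \setminus X$ to a point of $X$ and locates, via a sup, the last parameter at which $\gamma$ meets $Y$, which necessarily lands in $X \cap Y$; you instead observe that a ball avoiding $X \cap Y$ lies in $X^c \cup Y^c$, that $X^c$ and $Y^c$ are disjoint open sets since $X \cup Y = S^n$, and that connectedness of the ball forces it into one piece. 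Both are correct, and they are close in spirit (each is a compactness/connectedness incarnation of the intermediate value theorem). A minor bonus of your write-up is that you explicitly check $B^X \cup B^Y = S^n$, which the paper leaves implicit but which is needed in the final equality $\Vol_n(B^X_\varepsilon) - \Vol_n(B^X) + \Vol_n(B^Y_\varepsilon) - \Vol_n(B^Y) + \Vol_n(B^X \cap B^Y) = \Vol_n((B^X \cap B^Y)_\varepsilon)$.
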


\begin{proof}
First we claim that $(X \cap Y)_\varepsilon$ is the disjoint union of $X_\varepsilon \setminus X$, $Y_\varepsilon \setminus Y$, and $X \cap Y$.  It is clear that $(X \cap Y)_\varepsilon$ is the disjoint union of its intersections with $S^n \setminus X$, $S^n \setminus Y$, and $X \cap Y$.  Thus it suffices to show that
\[(X \cap Y)_\varepsilon \cap (S^n \setminus X) = X_\varepsilon \setminus X.\]
Because $(X \cap Y)_\varepsilon \subseteq X_\varepsilon$, we immediately have
\[(X \cap Y)_\varepsilon \cap (S^n \setminus X) \subseteq X_\varepsilon \setminus X.\]
For the reverse inclusion, let $y \in X_\varepsilon \setminus X$, and let $\gamma: [0, 1] \rightarrow S^n$ be a curve of length at most $\varepsilon$ with $\gamma(0) = y$ and $\gamma(1) = x \in X$.  Let $t \in [0, 1]$ be the greatest value with $\gamma(t) \in Y$.  Then $\gamma(t) \in X \cap Y$, so $y \in (X \cap Y)_\varepsilon$.

Thus, applying the isoperimetric inequality and additivity of measure, we have
\[\Vol_n((X \cap Y)_\varepsilon) = \Vol_n(X_\varepsilon) - \Vol_n(X) + \Vol_n(Y_\varepsilon) - \Vol_n(Y) + \Vol_n(X \cap Y) \geq\]
\vspace{-25pt}
\begin{eqnarray*}
&\geq& \Vol_n(B^X_\varepsilon) - \Vol_n(B^X) + \Vol_n(B^Y_\varepsilon) - \Vol_n(B^Y) + \Vol_n(B^X \cap B^Y) =\\
&=& \Vol_n((B^X \cap B^Y)_\varepsilon).
\end{eqnarray*}
\end{proof}

\begin{proof}[Proof of Theorem~\ref{codim1}]
Without loss of generality we assume $p(S^n) = [0, 1]$ and $y \leq \frac{1}{2}$.  Then on the right-hand side of the desired inequality we have
\[\{x \in S^n : p^{-1}(p(x)) \geqnbd p^{-1}(y)\} = p^{-1}[y, 1-y].\]
Define $\alpha, \beta \in \mathbb{R}$ as
\[\alpha = \sup \{t \in \mathbb{R} : \Vol_n f^{-1}(-\infty, t) \leq \Vol_n p^{-1}[0, y]\},\]
\[\beta = \inf \{t \in \mathbb{R} : \Vol_n f^{-1}(t, \infty) \leq \Vol_n p^{-1}[y, 1]\}.\]
For each $t \in [\alpha, \beta]$, apply the lemma with $X = f^{-1}(-\infty, t]$ and $Y = f^{-1}[t, \infty)$ to get $f^{-1}(t) \geqnbd p^{-1}[y_1, y_2]$ for some $y_1, y_2 \in [y, 1-y]$.  In particular, we have
\[f^{-1}(t) \geqnbd p^{-1}(y_1) \geqnbd p^{-1}(y).\]
Thus, we have
\[f^{-1}[\alpha, \beta] \subseteq \{x \in S^n : f^{-1}(f(x)) \geqnbd p^{-1}(y)\}.\]
Because $\Vol_n f^{-1}(-\infty, \alpha) \leq \Vol_n p^{-1}[0, y]$ and $\Vol_n f^{-1}(\beta, \infty) \leq \Vol_n p^{-1}[y, 1]$ we have
\[\Vol_n f^{-1}[\alpha, \beta] \geq \Vol_n p^{-1}[y, 1-y].\]
\end{proof}

\bibliography{very-funny-map}{}
\bibliographystyle{amsplain}
\end{document}